\author{Benjamin \textsc{Druart}\thanks{Université de Grenoble I, Département de Mathématiques, Institut Fourier, UMR 5582 du CNRS, 38402 Saint-Martin d'Hères Cedex, France. email : Benjamin.Druart@ujf-grenoble.fr}
\thanks{The research leading to these results has recieved funding from the European Research Council under the European Community's Seventh Framework Programme FP7/2007-2013 Grant Agreement no. 278722.}}
\title{Cartan Subgroups and Generosity in $SL_2(\Qp)$}
\newtheorem{defi}{Definition}
\newtheorem{fact}{Fact}
\newtheorem{prop}[defi]{Proposition}
\newtheorem{cor}[defi]{Corollary}
\newtheorem*{rmk}{Remark}
\newtheorem{lem}{Lemma}
\newtheorem{thm}[defi]{Theorem}
\newcommand{\Qp}{\mathbb{Q}_p}
\newcommand{\Zp}{\mathbb{Z}_p}
\newcommand{\Z}{\mathbb{Z}}
\begin{document}

\maketitle

\begin{abstract}
We show that there exist a finite number of Cartan subgroups up to conjugacy in $SL_2(\Qp)$ and we describe all of them. We show that the Cartan subgroup consisting of all diagonal matrices is generous and it is the only one up to conjugacy.
\end{abstract}
\textit{Keywords } p-adic field ; Cartan subgroup ; generosity
\\ \textit{MSC2010 } 20G25 ; 20E34 ; 11E57

\bigskip

A subset $X$ of a group $G$ is \emph{left-generic} if $G$ can be covered by finitely many left-translates of $X$. 
We define similarly right-genericity. 
If $X$ is $G$-invariant, then left-genericity is equivalent to  right-genericity.
This important notion in model theory was particulary developped by B. Poizat for groups in stable theories \cite{Poizat_stable}. For a group of finite Morley-rank and $X$ a definable subset, genericity is the same as being of maximal dimension \cite[lemme 2.5]{Poizat_stable}. 
The term generous was introduced in \cite{Jal_never} to show some conjuguation theorem. A definable subset $X$ of a group $G$ is \emph{generous} in $G$ if the union of its $G$-conjugates, $X^G=\{x^g\mid (x,g)\in X\times G\}$, is generic in $G$.  

In an arbitrary group $G$, we define a \emph{Cartan subgroup} $H$ as a maximal nilpotent subgroup such that every finite index normal subgroup $X \unlhd H$ is of finite index in its normalizer $N_G(X)$. First we can remark that they are infinite because $N_G(1)=G$ and if $H$ is finite then $\{1\}$ is of finite index in $H$.
In connected reductive algebraic groups over an algebraically closed fields, the maximal torus is typically an example of a Cartan subgroup. Moreover it is the only one up to conjugation and it is generous.
It has been remarked in \cite{cartan} that, in the group $SL_2(\mathbb{R})$, also the Cartan subgroup  consisting of diagonal matrices is   generous. But it has also been remarked that in the case of $SL_2(\mathbb{R})$, there exists another Cartan subgroup, namely $SO_2(\mathbb{R})$, which is not generous.

\bigskip
We will discuss here some apparently new remarks of the same kind in $SL_2(\Qp)$. First we describe all Cartan subgroups of $SL_2(\Qp)$. After we show that the Cartan subgroup consisting of diagonal matrices is generous and it is the only one up to conjugacy.

I would like to thank E. Jaligot, my supervisor for his help, E. Baro to explain me the case of $SL_2(\mathbb{R})$, and T. Altinel to help me to correct a mistake in a previous version of this paper.

%CARTAN SUBGROUPS
\subsection*{Description of Cartan subgroups up to conjugacy}
We note $v_p : \Qp\longrightarrow \Z\bigcup\{+\infty\}$ the p-adic-valuation, and $ac : \Qp^{\times} \longrightarrow \mathbb{F}_p$ the angular component defined by $ac(x)=res(p^{-v_p(x)}x)$ where $res: \Qp\longrightarrow \mathbb{F}_p$ is the residue map. 

With these notations, if $p\neq 2$, an element $x\in \Qp^{\times}$ is a square if and only if $v_p(x)$ is even and $ac(x)$ is a square in $\mathbb{F}_p$. For $p=2$, an element $x\in\mathbb{Q}_2$ can be written $x=2^nu$ with $n\in \Z$ and $u\in \mathbb{Z}_2^{\times}$, then $x$ is a square if $n$ is even and $u\equiv 1 (\mbox{mod } 8)$ \cite{JPS}.

\begin{fact}[\cite{JPS}]\label{Serre}
If $p\neq2$, the group $\Qp^{\times} / (\Qp^{\times})^2$ is isomorphic to $\Z/2\Z \times \Z/2\Z$, it has for representatives $\{1, u, p, up\}$, where $u\in \Zp^{\times}$ is such that $ac(u)$ is not a square in $\mathbb{F}_p$

The group $\mathbb{Q}_2^{\times}/(\mathbb{Q}_2^{\times})^2$ is isomorphic to $\Z/2\Z \times \Z/2\Z \times \Z/2\Z$, it has for representatives $\{\pm1,\pm 2, \pm 5, \pm10\}$. 
\end{fact}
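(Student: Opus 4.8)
The plan is to push the whole computation onto the unit group $\Zp^{\times}$ by means of the valuation, and then to use the structure of $\Zp^{\times}$ together with Hensel's lemma. First I would observe that $v_p\colon\Qp^{\times}\to\Z$ is a surjective group homomorphism admitting the section $n\mapsto p^{n}$, so that $\Qp^{\times}=p^{\Z}\times\Zp^{\times}$ as groups. Since the functor $A\mapsto A/A^{2}$ turns finite direct products into finite direct products, this gives
\[
\Qp^{\times}/(\Qp^{\times})^{2}\;\cong\;\Z/2\Z\;\times\;\Zp^{\times}/(\Zp^{\times})^{2},
\]
the first factor being generated by the class of $p$. So it only remains to compute $\Zp^{\times}/(\Zp^{\times})^{2}$ and to exhibit representatives.

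For $p\neq 2$ I would use the splitting $\Zp^{\times}\cong\mathbb{F}_p^{\times}\times(1+p\Zp)$ afforded by the residue map and the Teichm\"uller section. On the group of principal units, squaring is surjective: for $a\in 1+p\Zp$ the polynomial $X^{2}-a$ has the root $1$ modulo $p$, with derivative $2$ a unit since $p$ is odd, so Hensel's lemma furnishes a square root; hence $(1+p\Zp)/(1+p\Zp)^{2}=1$. On the other hand $\mathbb{F}_p^{\times}$ is cyclic of even order $p-1$, so $\mathbb{F}_p^{\times}/(\mathbb{F}_p^{\times})^{2}\cong\Z/2\Z$, generated by the class of any element with non-square reduction. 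Thus $\Zp^{\times}/(\Zp^{\times})^{2}\cong\Z/2\Z$ with representatives $\{1,u\}$, and feeding this back gives $\Qp^{\times}/(\Qp^{\times})^{2}\cong\Z/2\Z\times\Z/2\Z$ with representatives $\{1,u,p,up\}$, where $ac(u)$ is a non-square in $\mathbb{F}_p$.

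For $p=2$ the same argument runs with $8$ in the role of $p$: one shows that an element of $\mathbb{Z}_2^{\times}=1+2\mathbb{Z}_2$ is a square exactly when it is $\equiv 1\ (\mbox{mod }8)$. The inclusion $(1+2\mathbb{Z}_2)^{2}\subseteq 1+8\mathbb{Z}_2$ is the elementary identity $(1+2t)^{2}=1+4t(t+1)$ with $t(t+1)\in 2\mathbb{Z}_2$; conversely, for $a\equiv 1\ (\mbox{mod }8)$ the refined form of Hensel's lemma applies to $X^{2}-a$ at $x_{0}=1$, since $|1-a|_2\le 2^{-3}<2^{-2}=|2|_2^{2}$. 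Hence $\mathbb{Z}_2^{\times}/(\mathbb{Z}_2^{\times})^{2}\cong(\Z/8\Z)^{\times}\cong\Z/2\Z\times\Z/2\Z$ with representatives $\{1,3,5,7\}=\{\pm1,\pm5\}$, and multiplying by the valuation factor $\{1,2\}$ yields $\mathbb{Q}_2^{\times}/(\mathbb{Q}_2^{\times})^{2}\cong(\Z/2\Z)^{3}$ with representatives $\{\pm1,\pm2,\pm5,\pm10\}$.

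The only point requiring real care is the behaviour of squaring on the $1$-units — surjectivity on $1+p\Zp$ for $p$ odd, and image exactly $1+8\mathbb{Z}_2$ for $p=2$; this is precisely where Hensel's lemma enters and where the prime $2$ must be singled out. Everything else is bookkeeping with the product decomposition and with finite cyclic groups. As the statement records, this is the computation carried out in Serre's \emph{A Course in Arithmetic}, so in the present paper it is simply quoted rather than reproved.
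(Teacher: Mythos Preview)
Your argument is correct, and you have already identified the key point: the paper does not prove this statement at all but simply cites it from Serre. Your sketch is the standard proof and matches what one finds in the cited reference, so there is nothing further to compare.
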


For any prime $p$, and any $\delta$ in $\Qp^{\times}\backslash (\Qp^{\times})^2$, we put : 
\begin{align}
Q_1 &=\left\{\left(\begin{array}{cc}a & 0 \\0 & a^{-1}\end{array}\right)\in SL_2(\Qp)\mid a\in \Qp^{\times}\right\} \nonumber\\ 
Q_{\delta}&=\left\{\left(\begin{array}{cc}a & b \\b\delta & a\end{array}\right)\in SL_2(\Qp) \mid a,b\in \Qp \mbox{ and } a^2-b^2 \delta=1\right\} \nonumber
\end{align}

\begin{lem}\label{centre}
\begin{align} 
 \forall x\in Q_1\backslash \{I,-I\} \quad C_{SL_2(\Qp)}(x)=Q_1\nonumber
\\ \forall x\in Q_{\delta}\backslash \{I,-I\} \quad C_{SL_2(\Qp)}(x)=Q_{\delta} \nonumber
\end{align}
\end{lem}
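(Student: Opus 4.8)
The plan is to compute centralizers directly by matrix multiplication, exploiting the fact that both $Q_1$ and $Q_\delta$ are abelian (so they are contained in the centralizer of any of their non-central elements), and then showing the reverse inclusion by a case analysis on a generic element of $SL_2(\Qp)$. First I would verify that $Q_1$ and $Q_\delta$ are indeed commutative subgroups: for $Q_1$ this is the obvious fact that diagonal matrices commute; for $Q_\delta$ one checks that the product of $\left(\begin{array}{cc}a & b \\ b\delta & a\end{array}\right)$ and $\left(\begin{array}{cc}a' & b' \\ b'\delta & a'\end{array}\right)$ is symmetric in the two factors, and that this set is closed under multiplication and inversion inside $SL_2(\Qp)$. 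This immediately gives $Q_1\subseteq C_{SL_2(\Qp)}(x)$ for $x\in Q_1$ and likewise for $Q_\delta$.

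For the reverse inclusion, fix $x\in Q_1\setminus\{I,-I\}$, say $x=\mathrm{diag}(a,a^{-1})$ with $a\neq\pm1$ (so $a\neq a^{-1}$, using that $a^2\neq 1$), and let $g=\left(\begin{array}{cc}\alpha & \beta \\ \gamma & \delta'\end{array}\right)\in SL_2(\Qp)$ commute with $x$. Writing out $gx=xg$ and comparing entries forces $\beta(a^{-1}-a)=0$ and $\gamma(a-a^{-1})=0$, hence $\beta=\gamma=0$ since $a\neq a^{-1}$, so $g$ is diagonal and thus lies in $Q_1$. For $x\in Q_\delta\setminus\{I,-I\}$, write $x=\left(\begin{array}{cc}a & b \\ b\delta & a\end{array}\right)$ with $b\neq 0$ (if $b=0$ then $a^2=1$ and $x=\pm I$, excluded). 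Imposing $gx=xg$ on a general $g=\left(\begin{array}{cc}\alpha & \beta \\ \gamma & \delta'\end{array}\right)$ and comparing the four entries yields a linear system in $\alpha,\beta,\gamma,\delta'$ whose solution (using $b\neq 0$ and $\delta\neq 0$) forces $\delta'=\alpha$ and $\gamma=\beta\delta$; combined with $\det g=1$ this says exactly $g\in Q_\delta$.

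The only real subtlety — and the step I expect to need the most care — is handling the excluded elements correctly and making sure the non-vanishing conditions ($a\neq a^{-1}$ in the first case, $b\neq 0$ in the second) are genuinely equivalent to $x\notin\{I,-I\}$; in the $Q_\delta$ case this uses that $\delta\notin(\Qp^{\times})^2$ only indirectly (it guarantees $Q_\delta$ is anisotropic, but for the centralizer computation all that matters is $\delta\neq 0$). I would also remark that the linear algebra is entirely field-generic: nothing about it is special to $\Qp$, so the computation is the same as over $\mathbb{R}$. Once both inclusions are in place the lemma follows, and the same computation incidentally shows $C_{SL_2(\Qp)}(I)=C_{SL_2(\Qp)}(-I)=SL_2(\Qp)$, explaining why $\pm I$ must be excluded.
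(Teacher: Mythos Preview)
Your proposal is correct and is precisely the routine verification the paper has in mind: the paper offers no proof of this lemma at all, stating only that ``the checking of these equalities is left to the reader,'' so your direct matrix computation (using $a\neq a^{-1}$ in the $Q_1$ case and $b\neq 0$ in the $Q_\delta$ case) is exactly the intended argument.
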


The checking of these equalities is left to the reader.

\begin{prop}
The groups $Q_1$ and $Q_{\delta}$ are Cartan subgroups of $SL_2(\Qp)$ 
\end{prop}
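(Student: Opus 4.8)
The plan is, writing $G=SL_2(\Qp)$ and letting $H$ stand for $Q_1$ or for one of the groups $Q_\delta$, to verify the two requirements in the definition of a Cartan subgroup: that $H$ is a maximal nilpotent subgroup of $G$, and that every finite-index normal subgroup $X\unlhd H$ satisfies $[N_G(X):X]<\infty$. Both $Q_1$ and $Q_\delta$ are abelian --- for $Q_\delta$, it is the group of matrices of multiplication by the norm-one elements of the field $\Qp(\sqrt\delta)$ acting on $\Qp^2=\Qp\oplus\Qp\sqrt\delta$ --- hence nilpotent, and each is the group of $\Qp$-points of a one-dimensional torus over $\Qp$ ($Q_1$ split, $Q_\delta$ non-split). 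In particular $H$ is infinite and contains an element of infinite order: for $Q_1$ take $\mathrm{diag}(p,p^{-1})$, and for the compact group $Q_\delta$ use that a one-dimensional $\Qp$-analytic group has a torsion-free open pro-$p$ subgroup (a congruence subgroup). Since $H$ is abelian, a finite-index normal $X\unlhd H$ is just a finite-index subgroup. Throughout I use Lemma~\ref{centre}: for every $x\in H\setminus\{I,-I\}$ one has $C_G(x)=H$; as any infinite subgroup of $H$ meets $H\setminus\{I,-I\}$, this already shows that $H$ is a maximal abelian subgroup of $G$ and, taking centralizers, that $C_G(X)=H$ for every infinite $X\le H$.

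The key preliminary step is to control the normalizer $N_G(H)$. From $C_G(H)=H$, conjugation gives an embedding $N_G(H)/H\hookrightarrow\mathrm{Aut}(H)$. Now $H$ is Zariski-dense in the torus $T$ it generates (tori are unirational), and conjugation by any $g\in N_G(H)$, being an algebraic automorphism of $G$, carries the Zariski closure $T$ of $H$ to the Zariski closure of $gHg^{-1}=H$, hence stabilizes $T$; since the automorphism group of a one-dimensional torus is $\{\pm 1\}$, we get $[N_G(H):H]\le 2$, and any $g\in N_G(H)\setminus H$ acts on $H$ by inversion $h\mapsto h^{-1}$. For $H=Q_1$ this can also be seen by hand: $N_G(Q_1)$ is $Q_1$ together with the coset of antidiagonal matrices, which invert $Q_1$.

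Maximality now follows from the normalizer condition for nilpotent groups. If $H\subsetneq N$ with $N$ nilpotent, then $H$ is a proper subgroup of $N$, so $N_N(H)\supsetneq H$; pick $g\in N_N(H)\setminus H$. By Lemma~\ref{centre}, $g$ cannot centralize $H$ (otherwise $g\in C_G(x)=H$ for a non-central $x\in H$), so by the previous paragraph conjugation by $g$ inverts $H$. Then in $M:=\langle H,g\rangle\le N$ one computes $[h,g]=h^{-2}$ and, iterating, that the $n$-fold commutator $[h,g,\dots,g]$ equals $h^{(-2)^n}$; choosing $h\in H$ of infinite order, this is nontrivial for every $n$, so every term $\gamma_{n+1}(M)$ of the lower central series is nontrivial and $M$ is not nilpotent, contradicting $M\le N$. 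Hence $H$ is maximal nilpotent. Finally, for a finite-index subgroup $X\le H$ we have $C_G(X)=H$, so any $g$ normalizing $X$ normalizes $C_G(X)=H$; thus $H=N_H(X)\subseteq N_G(X)\subseteq N_G(H)$, whence $[N_G(X):X]\le[N_G(H):H]\,[H:X]\le 2\,[H:X]<\infty$. This shows $Q_1$ and $Q_\delta$ are Cartan subgroups.

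I expect the main difficulty to lie in the second step, controlling $N_G(H)$ --- specifically, ruling out that a normalizing element induces on $H$ an automorphism other than inversion. For $H=Q_1$ this is an easy matrix computation, but for $H=Q_\delta$ it rests either on the torus structure of $Q_\delta$ (Galois descent: a rank-one torus has automorphism group $\Z/2\Z$) or on a more ad hoc explicit description of the normalizer. A secondary point requiring a little care is the existence of an element of infinite order in the compact group $Q_\delta$, which is needed for the non-nilpotency argument in the maximality step.
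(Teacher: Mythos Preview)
Your proof is correct and shares the paper's skeleton --- abelianness, Lemma~\ref{centre} to identify $C_G(X)=H$, the normalizer condition for maximality, and the inclusion $N_G(X)\subseteq N_G(C_G(X))=N_G(H)$ for the finite-index clause --- but handles the two substantive points differently.

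For the normalizer, the paper treats the two cases separately: it computes $N_G(Q_1)=Q_1\cdot\langle\omega\rangle$ by hand and simply asserts $N_G(Q_\delta)=Q_\delta$. You instead argue uniformly, via the one-dimensional torus structure, that $[N_G(H):H]\le 2$ with any nontrivial coset acting by inversion. This is both cleaner and more robust: in fact the paper's assertion $N_G(Q_\delta)=Q_\delta$ fails whenever $-1$ is a norm from $\Qp(\sqrt\delta)$ (e.g.\ in the unramified case, where $g=\left(\begin{smallmatrix}x&y\\-y\delta&-x\end{smallmatrix}\right)$ with $x^2-y^2\delta=-1$ lies in $SL_2(\Qp)$ and inverts $Q_\delta$), so your weaker bound is exactly what the argument actually needs. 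For non-nilpotency of $\langle H,g\rangle$ when $g$ inverts $H$, the paper computes the whole lower central series $\Gamma_i=Q_1^{2^i}$ and uses $[\Qp^\times:(\Qp^\times)^2]<\infty$; your version, following a single infinite-order element through $[h,g,\dots,g]=h^{(-2)^n}$, is equivalent but obliges you to exhibit such an $h$ in the anisotropic case. Your pro-$p$ justification is fine; a lighter one is that $Q_\delta$ is the norm-one subgroup of $\Qp(\sqrt\delta)^\times$, uncountable while the torsion (roots of unity) is finite.
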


\begin{proof}
One checks easily that $Q_1$ is abelian and the normalizer of $Q_1$ is :
$$N_{SL_2(\Qp)}(Q_1)=Q_1\cdot<\omega> \quad\mbox{ where } \quad\omega =\left(\begin{array}{cc}0 & 1 \\-1 & 0\end{array}\right)$$
For $X$ a subgroup of $Q_1$, if $g\in N_{SL_2(\Qp)}(X)$ and $x\in X$, then, using lemma \ref{centre}
$$Q_1 = C_{SL_2(\Qp)}(x)=C_{SL_2(\Qp)}(x^g)=C_{SL_2(\Qp)}(x)^g=Q_1^g$$ 
It follows that $N_{SL_2(\Qp)}(X)=N_{SL_2(\Qp)}(Q_1)=Q_1\cdot <\omega>$ and if $X$ of finite index $k$ in $Q_1$, then $X$ is of index $2k$ in $N_{SL_2(\Qp)}(X)$. 
We can see that for $t$ in $Q_1$, $t^{\omega}=\omega^{-1}t\omega =t^{-1}$ and thus $[\omega , t]=t^2$.

If we note $\Gamma_i$ the descending central series of $N_{SL_2(\Qp)}(Q_1)$, we have $\Gamma_0=N_{SL_2(\Qp)}(Q_1)$, $\Gamma_1=[N_{SL_2(\Qp)}(Q_1),N_{SL_2(\Qp)}(Q_1)]=Q_1^2$ and $\Gamma_i=[N_{SL_2(\Qp)}(Q_1),\Gamma_{i-1}]=Q_1^{2^i}$. 
Observing that $Q_1\cong \Qp^{\times}$, we can conclude that the serie of $\Gamma_i$ is infinite because $Q_1$ is infinite and $Q_1^{2^i}$ is of finite index in $Q_1^{2^{i-1}}$. Thus $N_{SL_2(\Qp)}(Q_1)$ is not nilpotent.
By the normalizer condition for nilpotent groups, if $Q_1$ is properly contained in a nilpotent group $K$, then $Q_1 < N_K(Q_1) \leq K$, here $N_K(Q_1)=Q_1\cdot<\omega>$ which is not nilpotent, a contradiction. It finishes the proof that $Q_1$ is a Cartan subgroup.

For $\delta \in \Qp^{\times}\backslash (\Qp^{\times})^2$, we check similarly  that the group $Q_{\delta}$ is abelian. Since for all subgroups $X$ of $Q_{\delta}$, $C_{SL_2(\Qp)}(X)=Q_{\delta}$, it follows that $N_{SL_2(\Qp)}(X)=N_{SL_2(\Qp)}(Q_{\delta})=Q_{\delta}$, and if $X$ is of finite index in $Q_{\delta}$ then $X$ is of finite index in its normalizer.  By the normalizer condition for nilpotent groups, $Q_{\delta}$ is nilpotent maximal. 
\end{proof}

\begin{prop}\label {tr}
\begin{enumerate}
\item $Q_1^{SL_2(\Qp)} = \left\{ A\in SL_2(\Qp)\mid tr(A)^2-4 \in (\Qp^{\times})^2 \right\} \bigcup \left\{I,-I\right\}$

\item For any $\delta \in \Qp^{\times}\backslash(\Qp^{\times})^2$, there exist $\mu_1, ... \mu_n\in GL_2(\Qp)$ such that $$\bigcup _{i=1}^{n}Q_{\delta}^{\mu_i\cdot SL_2(\Qp)} = \left\{ A\in SL_2(\Qp)\mid tr(A)^2-4 \in \delta\cdot(\Qp^{\times})^2 \right\} \bigcup \left\{I,-I\right\}$$
\end{enumerate}
\end{prop}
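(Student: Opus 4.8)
The plan is to read off $SL_2(\Qp)$-conjugacy from characteristic polynomials. For $A\in SL_2(\Qp)$ the characteristic polynomial is $X^2-tr(A)X+1$, whose discriminant is $tr(A)^2-4$ and whose roots are $\lambda,\lambda^{-1}$ with $\lambda+\lambda^{-1}=tr(A)$. The two easy inclusions come essentially for free. A direct computation shows that every element of $Q_1$ is $\mathrm{diag}(a,a^{-1})$ with $tr^2-4=(a-a^{-1})^2\in(\Qp^\times)^2$, and that every element of $Q_\delta$ has trace $2a$ with $a^2-1=b^2\delta$, hence $tr^2-4=(2b)^2\delta\in\delta\cdot(\Qp^\times)^2$ (and equals $0$, i.e. the matrix is $\pm I$, exactly when $b=0$). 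Since $tr$ and $\det$ are invariant under conjugation by \emph{any} element of $GL_2(\Qp)$, this already gives $Q_1^{SL_2(\Qp)}\subseteq\{A:tr(A)^2-4\in(\Qp^\times)^2\}\cup\{I,-I\}$ and, for any choice of the $\mu_i$, $Q_\delta^{\mu_i\cdot SL_2(\Qp)}\subseteq\{A:tr(A)^2-4\in\delta\cdot(\Qp^\times)^2\}\cup\{I,-I\}$. The content of the proposition is the reverse inclusions.

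For part (1): if $tr(A)^2-4\in(\Qp^\times)^2$ it is a \emph{nonzero} square, so $\lambda=\frac12\bigl(tr(A)+\sqrt{tr(A)^2-4}\bigr)\in\Qp$ and $\lambda\neq\lambda^{-1}$; thus $A$ has two distinct $\Qp$-rational eigenvalues and is diagonalizable over $\Qp$, say $A=g^{-1}Dg$ with $D=\mathrm{diag}(\lambda,\lambda^{-1})\in Q_1$ and $g\in GL_2(\Qp)$. Here I would use that $C_{GL_2(\Qp)}(D)$ is the full diagonal torus, whose $\det$ map onto $\Qp^\times$ is \emph{surjective}; multiplying $g$ on the left by a suitable diagonal matrix arranges $\det g=1$, so $A\in Q_1^{SL_2(\Qp)}$. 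Together with $\pm I\in Q_1$ this proves (1).

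For part (2): given $A$ with $tr(A)^2-4=\delta c^2$, $c\in\Qp^\times$, set $B=\left(\begin{array}{cc} tr(A)/2 & c/2\\ (c/2)\delta & tr(A)/2\end{array}\right)$. One checks $(tr(A)/2)^2-(c/2)^2\delta=1$, so $B\in Q_\delta$, and $A$ and $B$ share the characteristic polynomial $X^2-tr(A)X+1$, which is \emph{irreducible} over $\Qp$ because its discriminant $\delta c^2$ is a non-square. Hence the minimal polynomial of each of $A,B$ equals this irreducible polynomial, so both are conjugate to its companion matrix and $A=g^{-1}Bg$ for some $g\in GL_2(\Qp)$. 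Now $C_{GL_2(\Qp)}(B)=\Qp[B]^\times\cong K^\times$ with $K=\Qp(\sqrt\delta)$, and under this identification $\det$ becomes the norm $N_{K/\Qp}\colon K^\times\to\Qp^\times$, whose image has index $2$ in $\Qp^\times$ (local norm index theorem, or an explicit computation over each of the finitely many quadratic extensions of $\Qp$). Crucially $K=\Qp(\sqrt{tr(A)^2-4})=\Qp(\sqrt\delta)$ does \emph{not} depend on $A$. So I would fix $\mu_1=I$ and choose $\mu_2\in GL_2(\Qp)$ (a suitable diagonal matrix) with $\det\mu_2$ in the nontrivial coset of $N_{K/\Qp}(K^\times)$ in $\Qp^\times$: if $\det g\in N_{K/\Qp}(K^\times)$ then $A$ is $SL_2(\Qp)$-conjugate to $B\in Q_\delta$, hence lies in $Q_\delta^{\mu_1\cdot SL_2(\Qp)}$; otherwise $\det g\cdot(\det\mu_2)^{-1}\in N_{K/\Qp}(K^\times)=\det C_{GL_2(\Qp)}(B)$, and a short centralizer computation gives that $A$ is $SL_2(\Qp)$-conjugate to $\mu_2^{-1}B\mu_2$, hence lies in $Q_\delta^{\mu_2\cdot SL_2(\Qp)}$. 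With $\pm I\in Q_\delta$ this yields (2), in fact with $n=2$.

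The main obstacle is the gap between $GL_2(\Qp)$- and $SL_2(\Qp)$-conjugacy: $GL_2(\Qp)$-conjugate matrices need not be $SL_2(\Qp)$-conjugate, the obstruction sitting in $\Qp^\times/\det C_{GL_2(\Qp)}(A)$. In the split case of part (1) this quotient is trivial since the diagonal torus realizes every determinant, but in the non-split case of part (2) it has order $2$, which is exactly why two outer representatives $\mu_i$ are genuinely required. Keeping track of these two cosets, pinning down $\det C_{GL_2(\Qp)}(B)=N_{K/\Qp}(K^\times)$, and observing that one $\mu_2$ serves simultaneously for all admissible traces (because the field $\Qp(\sqrt\delta)$ is the same for all of them) is the only delicate part; everything else is the routine linear algebra of $2\times 2$ matrices over a field.
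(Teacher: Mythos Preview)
Your argument is correct. Part~(1) is essentially the paper's proof: the paper replaces the conjugating matrix $P$ by $\tilde P$ obtained from $P$ by dividing one column by $\det P$, which is exactly your ``multiply on the left by an element of the diagonal centralizer to fix the determinant''.

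For part~(2) you take a genuinely different route from the paper. The paper obtains $GL_2(\Qp)$-conjugacy of $A$ to an element of $Q_\delta$ by an explicit eigenvector computation in $\Qp(\sqrt\delta)$ and a change of basis; you instead invoke the rational canonical form (both $A$ and your explicit $B\in Q_\delta$ share the irreducible characteristic polynomial, hence are $GL_2$-conjugate to its companion matrix). For the passage from $GL_2$- to $SL_2$-conjugacy, the paper argues abstractly that $Ext(S)/Int(S)\cong PGL_2(\Qp)/PSL_2(\Qp)\cong\Qp^\times/(\Qp^\times)^2$ is finite, which yields \emph{some} finite $n$ (at most $4$ for $p\neq 2$, at most $8$ for $p=2$); you identify the precise obstruction as $\Qp^\times/\det C_{GL_2(\Qp)}(B)=\Qp^\times/N_{K/\Qp}(K^\times)$ with $K=\Qp(\sqrt\delta)$, of order~$2$ by the local norm index theorem, and observe that $K$ depends only on $\delta$ so a single $\mu_2$ works uniformly. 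Your approach is sharper (it gives $n=2$) and makes the structure of the obstruction transparent; the paper's approach is more elementary in that it avoids any appeal to norm groups and only needs the finiteness of $\Qp^\times/(\Qp^\times)^2$.
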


We put : 
$$ U=\left\{\left(\begin{array}{cc}1 & u \\0 & 1\end{array}\right)\mid u\in\Qp\right\}\bigcup\left\{\left(\begin{array}{cc}-1 & u \\0 & -1\end{array}\right)\mid u\in \Qp\right\} \mbox{ and } U^+=\left\{\left(\begin{array}{cc}1 & u \\0 & 1\end{array}\right)\mid u\in\Qp\right\}$$

If $A\in SL_2(\Qp)$ satisfies $tr(A)^2-4=0$, then either $tr(A)=2$ or $tr(A)=-2$, and $A$ is a conjugate of an element of $U$. In this case, $A$ is said \textit{unipotent}.  It follows, from Proposition \ref{tr} : 

\begin{cor}
We have the following partition : 
$$SL_2(\Qp)\backslash \{I,-I\} = (U\backslash \{I,-I\}) ^{SL_2(\Qp)}\sqcup (Q_1\backslash \{I,-I\})^{SL_2(\Qp)} \sqcup \bigsqcup_{\delta \in \Qp^{\times}/(\Qp^{\times})^2}\bigcup_{i=1}^{n}(Q_{\delta}^{\mu_i} \backslash \{I,-I\})^{SL_2(\Qp)} \label{union}$$
\end{cor}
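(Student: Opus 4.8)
The plan is to read the partition off from the trace. Two elementary observations drive everything. First, $tr$ is invariant under conjugation by every element of $GL_2(\Qp)$, hence by the $\mu_i$ as well as by $SL_2(\Qp)$. Second, for a non-central $A\in SL_2(\Qp)$ the value $tr(A)^2-4$ is either $0$, a non-zero square in $\Qp^{\times}$, or a non-square in $\Qp^{\times}$, and these three alternatives are exactly what separates the three families on the right-hand side.

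First I would show that the right-hand side covers $SL_2(\Qp)\backslash\{I,-I\}$. Take $A$ non-central and put $t=tr(A)$. If $t^2-4=0$ then $t=\pm 2$ and, as recalled just above, $A$ is conjugate to an element of $U\backslash\{I,-I\}$. If $t^2-4\neq 0$ then by Fact~\ref{Serre} it lies in exactly one class of $\Qp^{\times}/(\Qp^{\times})^2$: if that class is trivial, Proposition~\ref{tr}(1) yields $A\in (Q_1\backslash\{I,-I\})^{SL_2(\Qp)}$; if it is the class of a non-square $\delta$, Proposition~\ref{tr}(2) yields $A\in\bigcup_{i=1}^{n}(Q_{\delta}^{\mu_i}\backslash\{I,-I\})^{SL_2(\Qp)}$. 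Since Fact~\ref{Serre} provides only finitely many classes and each $\delta$ only finitely many $\mu_i$, this is a finite union of the asserted shape.

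Next I would check that the union is disjoint, which reduces to computing $tr(A)^2-4$ for $A$ ranging over each family: it is $0$ when $A$ is a non-central unipotent element (then $tr(A)=\pm 2$); it equals $(a-a^{-1})^2\in(\Qp^{\times})^2$ when $A=\mathrm{diag}(a,a^{-1})\in Q_1$ is non-central; and it equals $4b^2\delta\in\delta\cdot(\Qp^{\times})^2$ when $A\in Q_{\delta}$ has the displayed form with $b\neq 0$. Each of these values is conjugation-invariant, even under the $GL_2(\Qp)$-twists by the $\mu_i$; since $0$, $(\Qp^{\times})^2$ and the cosets $\delta\cdot(\Qp^{\times})^2$ are pairwise disjoint subsets of $\Qp$, no element can belong to two of the families. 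Covering together with disjointness gives the claimed partition.

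I do not expect a real obstacle here: the only step carrying genuine content is the covering in the semisimple case, which is precisely Proposition~\ref{tr}, already established; everything else is routine bookkeeping with traces. The one point to keep an eye on is that the conjugating matrices $\mu_i$ lie in $GL_2(\Qp)$ rather than $SL_2(\Qp)$, but since $tr$ is constant on $GL_2(\Qp)$-conjugacy classes this does not affect the argument.
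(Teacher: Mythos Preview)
Your argument is correct and follows exactly the route the paper intends: the paper states the corollary as an immediate consequence of Proposition~\ref{tr} together with the preceding observation that $tr(A)^2-4=0$ forces $A$ to be unipotent, and your proof is simply a careful unpacking of that implication via the trichotomy on $tr(A)^2-4$. There is no substantive difference in approach.
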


\begin{rmk}
If $\delta$ and $\delta'$ in $\Qp^{\times}$ are in the same coset of $(\Qp^{\times})^2$, then, by Proposition \ref{tr}, if $x'\in Q_{\delta'}^{\mu'}$ with $\mu'\in GL_2(\Qp)$, then there exists $x\in Q_{\delta}$, $\mu\in GL_2(\Qp)$ and $g\in SL_2(\Qp)$, such that $x'=x^{\mu\cdot g}$, thus, by lemma \ref{centre}, $Q_{\delta'}=C_{SL_2(\Qp)}(x')=C_{SL_2(\Qp)}(x)^{\mu\cdot g}=Q_{\delta}^{\mu \cdot g}$.
Therefore the Corollary \ref{union} makes sense.
\end{rmk}

\begin{proof}[Proof of Proposition \ref{tr}]
$\bullet$ If $A\in Q_1^{SL_2(\Qp)}$, then there exists $P\in SL_2(\Qp)$ such that 
$$A=P \left(\begin{array}{cc}a & 0 \\0 & a^{-1}\end{array}\right)P^{-1}$$
with $a\in \Qp^{\times}$. We have $tr(A)=a+a^{-1}$, so $tr(A)^2-4=(a+a^{-1})^2-4=(a-a^{-1})^2$ and $tr(A)^2-4\in (\Qp^{\times})^2$.

Conversely , let $A$ be in $SL_2(\Qp)$ with $tr(A)^2-4$ a square. The caracteristic polynomial is $\chi_A(X)=X^2-tr(A)X+1$ and its discriminant is $\Delta=tr(A)^2-4\in (\Qp^{\times})^2$, so $\chi_A$ has two distinct roots in $\Qp$ and $A$ is diagonalizable in $GL_2(\Qp)$. There is $P\in GL_2(\Qp)$, and  $D\in SL_2(\Qp)$ diagonal such that $A=PDP^{-1}$. If
$$P=\left(\begin{array}{cc}\alpha & \beta \\\gamma & \delta \end{array}\right)$$ 
we put 
$$\tilde{P}=\left(\begin{array}{cc}\frac{\alpha}{det(P)} & \beta \\\frac{\gamma}{det(P)} & \delta\end{array}\right)$$ 
and we have $\tilde{P}\in SL_2(\Qp)$ and $A=\tilde{P}D\tilde{P}^{-1} \in Q_1^{SL_2(\Qp)}$.

$\bullet$
If $A$ is in $Q_{\delta}^{\mu\cdot SL_2(\Qp)}\backslash \{I,-I\}$ with $\mu \in GL_2(\Qp)$, then $tr(A)=2a$ and there exists $b\neq 0$ such that $a^2-b^2\delta=1$. 
So $tr(A)^2-4 = 4a^2-4=4(b^2\delta +1)-4=(2b)^2\delta \in \delta\cdot(\Qp^{\times})^2$

Conversely we proceed as in the real case and the root $i\in\mathbb{C}$. The discriminant of $\chi_A$, $\Delta=tr(A)^2-4$ is a square in $\Qp(\sqrt{\delta})$, and the caracteristic polynomial $\chi _A$ has two roots in $\Qp(\sqrt{\delta})$ : $\lambda_1 = \alpha + \beta \sqrt{\delta}$ and $\lambda_2=\alpha -\beta \sqrt{\delta}$ (with $\alpha, \beta \in \Qp$). For the two eigen values $\lambda_1$ and $\lambda_2$, A has eigen vectors : 
$$v_1=\left(\begin{array}{c}x+y\sqrt{\delta} \\x'+y'\sqrt{\delta}\end{array}\right) \quad \mbox{ and } \quad v_2=\left(\begin{array}{c}x-y\sqrt{\delta} \\x'-y'\sqrt{\delta}\end{array}\right)$$
 
In the basis $\left\{\left(x, x'\right),\left(y, y'\right)\right\}$, the matrix A can be written : 
$$\left(\begin{array}{cc}a & b \\b\delta & a\end{array}\right)$$ 
with $a, b\in \Qp$. We can conclude that there exists $P\in GL_2(\Qp)$ such that :
$$A=P\left(\begin{array}{cc}a & b \\b\delta & a\end{array}\right)P^{-1}$$
We proved that $Q_{\delta}^{GL_2(\Qp)}= \left\{ A\in SL_2(\Qp)\mid tr(A)^2-4 \in \delta\cdot(\Qp^{\times})^2 \right\} \bigcup \left\{I,-I\right\}$.

Let us now study the conjugation in $GL_2(\Qp)$ and in $SL_2(\Qp)$. For the demonstration, we note : $S=SL_2(\Qp)$, $G=GL_2(\Qp)$ and $Ext(S)=\{f\in Aut(S) \mid f(M)=M^P\mbox{ for } M\in S, P\in G\}$, $Int(S)=\{f\in Aut(S) \mid f(M)=M^P \mbox{ for }M\in S, P\in S\}$.
Let $P,P'\in G$ and $M\in S$ then : 
$$M^P=M^{P'} \Leftrightarrow P^{-1}MP=P'^{-1}MP' \Leftrightarrow P'P^{-1}M=MP'P^{-1} \Leftrightarrow PP'\in C_G(M)$$
So $P$ and $P'$ define the same automorphism if and only if  $P'P^{-1}\in C_G(S)=Z(G)=\Qp\cdot I_2$, then $Ext(S)\cong GL_2(\Qp)/Z(G) \cong PGL_2(\Qp)$, and similarly $Int(S)\cong SL_2(\Qp)/Z(S) \cong PSL_2(\Qp)$. 
It is known that $PGL_2(\Qp)/PSL_2(\Qp) \cong \Qp^{\times}/(\Qp^{\times})^2$.
Finally $Int(S)$ is a normal subgroup of finite index in $Ext(S)$, and there exist $\mu_1, ..., \mu_n\in GL_2(\Qp)$ such that :
$$Q_{\delta}^{GL_2(\Qp)}= Q_{\delta}^{\mu_1\cdot SL_2(\Qp)} \cup ... \cup Q_{\delta}^{\mu_n\cdot SL_2(\Qp)}$$
\end{proof}

\begin{thm}
The subgroups $Q_1$, $Q_{\delta}$ (for $\delta \in \Qp^{\times}\backslash (\Qp^{\times})^2$) and the externally conjugate $Q_{\delta}^{\mu_i}$ (for $\mu_1, ..., \mu_n\in GL_2(\Qp)$) are the only Cartan subgroups up to conjugacy of $SL_2(\Qp)$ 	
\end{thm}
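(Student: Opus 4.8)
The plan is to show that any Cartan subgroup $H$ of $SL_2(\Qp)$ is conjugate to one of the groups already identified. The starting point is the partition in Corollary \ref{union}: every non-central element of $SL_2(\Qp)$ is conjugate into $U$, into $Q_1$, or into some $Q_\delta^{\mu_i}$. Since a Cartan subgroup is infinite (as remarked in the introduction) and nilpotent, I would first pick a non-central element $x \in H$ and split into cases according to which piece of the partition $x$ falls into. In the cases where $x$ is conjugate into $Q_1$ or into some $Q_\delta^{\mu_i}$, Lemma \ref{centre} gives $C_{SL_2(\Qp)}(x) = Q_1$ (resp. $Q_\delta^{\mu_i}$) after conjugation; since $H$ is nilpotent and abelian-by-the-centralizer-structure, $H$ commutes with $x$ in a strong enough sense that $H$ lands inside that centralizer. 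More precisely, as $H$ is nilpotent it has nontrivial center, so I may even choose $x$ central in $H$, whence $H \le C_{SL_2(\Qp)}(x)$, which equals $Q_1$ or $Q_\delta^{\mu_i}$; maximality of $H$ as a nilpotent subgroup then forces equality, since $Q_1$ and $Q_\delta^{\mu_i}$ are themselves Cartan (by the previous Proposition) and in particular nilpotent.

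The main obstacle is the unipotent case: what if every non-central element of $H$ is unipotent, i.e. $H$ (modulo $\{I,-I\}$) consists of unipotent elements and is conjugate into $U$? I would rule this out by showing $U$ is \emph{not} a Cartan subgroup, because it fails the normalizer-finite-index condition. Concretely, take the finite-index (indeed index-two) subgroup $U^+ \unlhd U$; its normalizer in $SL_2(\Qp)$ contains the full diagonal torus $Q_1$ (each diagonal matrix conjugates $U^+$ to itself, scaling the entry $u$), so $N_{SL_2(\Qp)}(U^+) \supseteq U^+ \cdot Q_1$, which has infinite index over $U^+$ since $Q_1 \cong \Qp^\times$ is infinite. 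Hence $U^+$ is a finite-index subgroup of $U$ that is \emph{not} of finite index in its normalizer, so $U$ violates the defining condition for a Cartan subgroup; and since any subgroup of $SL_2(\Qp)$ all of whose non-central elements are unipotent is contained (after conjugation) in $U$ — unipotent elements with a common fixed point on $\mathbb{P}^1$ — no such subgroup can be a Cartan subgroup either, because enlarging to $U$ is still nilpotent, contradicting maximality, or because one re-runs the normalizer argument directly on $H$.

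Assembling the pieces: a Cartan subgroup $H$ is infinite and nilpotent, so it has a nontrivial center; if $Z(H) \setminus \{I,-I\} \neq \emptyset$ pick $x$ there and the above centralizer argument conjugates $H$ into exactly one of $Q_1$ or $Q_\delta^{\mu_i}$ with equality by maximality; if instead $Z(H) \subseteq \{I,-I\}$ then, $H$ being nilpotent of class $\ge 2$ with tiny center, one shows $H$ must be unipotent-by-center (in $SL_2$ a nilpotent subgroup with central-only torsion part is forced to consist of $\pm$ unipotents), landing in the excluded $U$ case. Thus every Cartan subgroup is conjugate to $Q_1$ or to some $Q_\delta^{\mu_i}$, and conversely these are pairwise non-conjugate: $Q_1$ and the $Q_\delta$ are distinguished by the value of $tr(A)^2 - 4$ modulo squares on their non-central elements (Proposition \ref{tr}), which is a conjugacy invariant, and the externally-conjugate copies $Q_\delta^{\mu_i}$ for distinct $i$ are non-conjugate in $SL_2(\Qp)$ precisely because they represent distinct cosets of $PSL_2(\Qp)$ in $PGL_2(\Qp)$, as analysed in the proof of Proposition \ref{tr}. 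This completes the classification.
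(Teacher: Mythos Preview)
Your central idea --- pick $x\in Z(H)\setminus\{\pm I\}$, use $H\le C_{SL_2(\Qp)}(x)$, and then invoke maximality --- is sound and, when it applies, is cleaner than the paper's argument. The paper instead first proves that a Cartan subgroup $K$ contains no unipotent $\ne\pm I$ (via a normalizer--condition argument forcing $K\le B$ and then $K\le U$), and only afterwards picks any $x\in K\cap Q$ (not assumed central in $K$) and runs a second normalizer--condition argument to get $K=Q$; in the $Q_1$ case it must additionally exclude the Weyl element $\omega'$ from $K$ by a commutator computation. Your centralizer shortcut collapses that second step entirely.

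The genuine gap is the case $Z(H)\subseteq\{\pm I\}$, which you dispatch with the assertion that ``in $SL_2$ a nilpotent subgroup with central-only torsion part is forced to consist of $\pm$ unipotents.'' This is true but not obvious, and proving it is essentially the work you are trying to avoid. Infinite nilpotent groups can have finite center in general (e.g.\ an infinite central product of copies of the Heisenberg group over $\mathbb{F}_p$), so you cannot discard this case for free. To justify your assertion you would take a semisimple $h\in H\setminus\{\pm I\}$, set $Q=C_{SL_2(\Qp)}(h)$, show $N_H(H\cap Q)\le N_{SL_2(\Qp)}(Q)$ via Lemma~\ref{centre}, and then split on whether $Q$ is of $Q_\delta$-type ($N_S(Q)=Q$, forcing $H\le Q$ by the normalizer condition) or of $Q_1$-type (where an element of $Q\omega'$ in $H$ would invert $H\cap Q$ and force it into the $2$-torsion $\{\pm I\}$, contradicting $h\in H\cap Q$). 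But this is precisely the paper's second half. So your reorganization does not actually save the normalizer analysis; it relocates it into the unjustified clause.

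A minor point: your final paragraph asserts that the $Q_\delta^{\mu_i}$ are pairwise non-conjugate in $SL_2(\Qp)$. The theorem as stated only claims that every Cartan subgroup is conjugate to one on the list, not that the list is irredundant, and the coset argument you cite controls the conjugating elements rather than the resulting subgroups, so this would need more care if you want it.
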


\begin{proof}
It is clear that the image of a Cartan subgroup by an automorphism is also a Cartan subgroup. 
For the demonstration we note $S=SL_2(\Qp)$ and $B$ the following subgroup of $SL_2(\Qp)$ :
$$B=\left\{ \left( \begin{array}{cc}t & u \\0 & t^{-1}\end{array}\right)  \mid t\in \Qp^{\times}, u\in \Qp\right\}$$
With these notations, we can easily check for $g\in U\backslash\{I,-I\}$ that $C_S(g)=U$ and $N_S(U)=B$. Moreover it is known that every $q\in B$ can be written as $q=tu$ where $t\in Q_1$ and $u\in U$.

Consider $K$ a Cartan subgroup of $SL_2(\Qp)$. We will show that $K$ is a conjugate of $Q_1$ or of one of the $Q_{\delta}^{\mu}$ (for $\delta \in \Qp^{\times}\backslash(\Qp^{\times})^2$ and $\mu\in GL_2(\Qp)$). First we prove $K$ cannot contain a unipotent element other than $I$ or $-I$. Since a conjugate of a Cartan subgroup is still a Cartan subgroup, it suffices to show that $K\cap U=\{I,-I\}$.

In order to find a contradiction, let $u\in K$ be a element of $U$ different from $I$ or $-I$, $u$ is in $K\cap B$. If $\alpha \in N_S(K\cap B)$, then we have that $u^{\alpha}\in K\cap B$, and since $tr(u^{\alpha})=tr(u)=±2$, $u^{\alpha}$ is still in $U$. Therefore $U=C_S(u)=C_S(u^{\alpha})=C_S(u)^{\alpha}=U^{\alpha}$ and so $\alpha$ is in $N_S(U)=B$. 
It follows $N_S(K\cap B) \leq B$ and finally $N_K(K\cap B)=K\cap B$.
By the normalizer condition $K\cap B$ cannot be proper in $K$, then $K\leq B$.

It is known (see for example \cite[Lemma 0.1.10] {Wagner}) that if $K$ is a nilpotent group and $H\unlhd K$ a non trivial normal subgroup, then $H\cap Z(K)$ is not trivial. If we assume that $K\nleq U^+$, since $K\leq B=N_S(U^+)$, $K\cap U^+$ is normal in $K$, and so $K\cap U^+$ contains a non trivial element $x$ of the center $Z(K)$. For $q\in K\backslash U^+$, there are $t\in Q_1\backslash \{I\}$ and $u\in U$ such that $q=tu$. We have $[x,q]=I$ so $[x,t]=I$, so $t=-I$ because $C_S(x)=U$. Therefore $K\leq U$.
Since $K$ is maximal nilpotent and $U$ abelian, $K=U$. But $U$ is not a Cartan subgroup, because it is of infinite index in its normalizer $B$. A contradiction.

Since $K$ does not contain a unipotent element, $K$ intersects a conjugate of $Q_1$ or of one of the $Q_{\delta}^{\mu}$ (for $\delta \in \Qp^{\times}\backslash (\Qp^{\times})^2$ and $\mu\in GL_2(\Qp)$) by Corollary \ref{union}, we note $Q$ this subgroup. Let us show that $K=Q$.
Let be $x$ in $K\cap Q$, and $\alpha\in N_K(K\cap Q)$, then $x^{\alpha}\in Q$, and, by lemma \ref{centre}, $Q=C_S(x^{\alpha})=C_S(x)^{\alpha}=Q^{\alpha}$. Thus $\alpha \in N_S(Q)$, and $N_K(K\cap Q)\leq N_S(Q)$.

\begin{description}
 	 \item[1rst case] $Q$ is a conjugate of $Q_1$, then $N_S(Q)=Q\cdot<\omega'>$ where $\omega'=\omega^g$ if $Q=Q_1^g$. We have also $\omega'^2\in Q$ and $t^{\omega'}=t^{-1}$ for $t\in Q$. %First we can easily see that the cartan subgroup $K$ must be infinite. Otherwise the subgroup of $K$ generated by one element of $Q$ will be of infinite index in his normalizer $Q$ in $SL_2(\Qp)$.
One can check that $N_S(Q\cdot<\omega'>)=Q\cdot<\omega'>$, if $\omega' \in K$ then $N_K(Q\cdot<\omega'>\cap K)=Q\cdot<\omega'>\cap K$, by normalizer condition $K\leq Q\cdot<\omega'>$. If we note $n$ the nilpotency classe of K, and $t\in K\cap Q$ then $[\omega', \omega', ... , \omega' , t]=t^{2^n}=1$, so $t $ is an $n^{th}$ root of unity, so $K\cap Q$ and $K=(K\cap Q)\cdot <\omega>$ are finite. 
 A contradiction, so $\omega'\nin K$. Then $N_K(Q\cap K)\leq Q\cap K$, it follows by normalizer condition that $K\leq Q$, and by maximality of $K$, $K=Q$.
	  \item[2nd case] $Q$ is a conjuguate of $Q_{\delta}$ (for $\delta \in \Qp^{\times}\backslash (\Qp^{\times})^2$), then $N_S(Q)=Q$. It follows similarly that $K=Q$.
  \end{description}
\end{proof}

%GENEROSITY 
\subsection*{Generosity of the Cartan subgroups}
Our purpose is now to show the generosity of the Cartan subgroup $Q_1$. It follows from the next more general proposition :

\begin{prop}
\begin{enumerate}
  \item The set $W=\{A\in SL_2(\Qp)\mid v_p(tr(A))<0\}$ is generic in $SL_2(\Qp)$.
  \item The set $W'=\{A\in SL_2(\Qp) \mid v_p(tr(A))\geq 0\}$ is not generic in $SL_2(\Qp)$.\end{enumerate}
\end{prop}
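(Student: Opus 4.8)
The plan is to prove (1) by exhibiting four explicit elements $h_1,\dots,h_4$ of $SL_2(\Qp)$ whose left-translates $h_1^{-1}W,\dots,h_4^{-1}W$ already cover $SL_2(\Qp)$, and to prove (2) by producing an explicit sequence $(A_n)$ in $SL_2(\Qp)$ that eventually avoids every finite union of left-translates of $W'$. In both parts one first observes that $W$ and $W'$ are unions of conjugacy classes, since the trace is a conjugacy invariant; by the remark in the introduction, for such sets left-genericity and right-genericity coincide, so it suffices to argue with left-translates. The only computational tool needed is the identity $tr(hA)=ra+sc+tb+ud$ for $h=\left(\begin{array}{cc}r&s\\t&u\end{array}\right)$ and $A=\left(\begin{array}{cc}a&b\\c&d\end{array}\right)$, which lets one read off $v_p(tr(hA))$ directly from the entries.

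For (1) I would take
$$h_1=\left(\begin{array}{cc}p^{-1}&0\\0&p\end{array}\right),\ h_2=\left(\begin{array}{cc}p&0\\0&p^{-1}\end{array}\right),\ h_3=\left(\begin{array}{cc}0&-p\\p^{-1}&0\end{array}\right),\ h_4=\left(\begin{array}{cc}0&p^{-1}\\-p&0\end{array}\right),$$
so that $tr(h_1A)=p^{-1}a+pd$, $tr(h_2A)=pa+p^{-1}d$, $tr(h_3A)=p^{-1}b-pc$ and $tr(h_4A)=p^{-1}c-pb$. It remains to check that for every $A\in SL_2(\Qp)$ at least one of these four traces has negative valuation, since then $SL_2(\Qp)=\bigcup_{j=1}^{4}h_j^{-1}W$. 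The verification splits into three cases. If $\min(v_p(a),v_p(d))<0$: assuming (say) $v_p(a)\le v_p(d)$, the summands of $tr(h_1A)$ have valuations $v_p(a)-1$ and $v_p(d)+1$, the first being strictly smaller, so $v_p(tr(h_1A))=v_p(a)-1<0$; the subcase $v_p(d)<v_p(a)$ is symmetric with $h_2$. If $\min(v_p(b),v_p(c))<0$: the identical argument applies to $h_3,h_4$. Otherwise all four entries lie in $\Zp$, hence $A\in SL_2(\Zp)$ and its reduction modulo $p$ is invertible, so some entry is a unit; the $h_j$ corresponding to a unit entry then yields a trace of valuation exactly $-1$ (the unit term contributes valuation $-1$, the other has valuation $\ge 1$). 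I do not expect a serious obstacle in (1): the one point to respect is to choose the member of $\{h_1,h_2\}$ (resp. $\{h_3,h_4\}$) matching the diagonal (resp. off-diagonal) entry of smaller valuation, which is exactly what keeps all the relevant sums non-degenerate.

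For (2), suppose toward a contradiction that $SL_2(\Qp)=\bigcup_{i=1}^{k}g_iW'$ and set
$$A_n=\left(\begin{array}{cc}p^{-n}&p^{-2n}\\0&p^{n}\end{array}\right)\in SL_2(\Qp)\qquad(n\ge 1).$$
Writing $g_i^{-1}=\left(\begin{array}{cc}\alpha_i&\beta_i\\\gamma_i&\epsilon_i\end{array}\right)$, the identity above gives $tr(g_i^{-1}A_n)=\gamma_ip^{-2n}+\alpha_ip^{-n}+\epsilon_ip^{n}$. If $\gamma_i\ne 0$ then, for $n$ large, $v_p(\gamma_ip^{-2n})=v_p(\gamma_i)-2n$ is strictly below the valuations of the other two summands, whence $v_p(tr(g_i^{-1}A_n))=v_p(\gamma_i)-2n\to-\infty$. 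If $\gamma_i=0$ then $\det(g_i^{-1})=\alpha_i\epsilon_i=1$ forces $\alpha_i\ne 0$, and for $n$ large $v_p(\alpha_i)-n<v_p(\epsilon_i)+n$, so $v_p(tr(g_i^{-1}A_n))=v_p(\alpha_i)-n\to-\infty$. Thus there is $N$ with $v_p(tr(g_i^{-1}A_n))<0$ for all $i$ and all $n\ge N$, i.e. $g_i^{-1}A_N\notin W'$ for every $i$, so $A_N\notin\bigcup_i g_iW'$ — a contradiction; hence $W'$ is not generic. The delicate point, and to my mind the real content of (2), is the choice of $A_n$: the $(1,2)$-entry must be of size $p^{-2n}$, not merely $p^{-n}$, so that the $\gamma_i$-term dominates whenever $\gamma_i\ne 0$, while the diagonal must still vary with $n$ in order to handle the upper-triangular case $\gamma_i=0$; with a less carefully chosen family some $g_i$ would keep $A_n$ inside $W'$ for all large $n$.
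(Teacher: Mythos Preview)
Your argument is correct in both parts. For part~(1) it parallels the paper's closely: four explicit translates and trace computations. The paper takes $A_1=I$, $A_2=\omega$, one diagonal $A_3$ with entries of valuations $\pm v_p(a)$ ($v_p(a)>0$), and one anti-diagonal $A_4$ of the same shape, then argues by contradiction: from $M\notin\bigcup_i A_iW$ it solves for the entries of $M$ and obtains $v_p(u),v_p(v)>0$ and $v_p(x),v_p(y)\ge0$, whence $v_p(\det M)>0$. Your two-diagonal/two-anti-diagonal quartet $h_1,\dots,h_4$ together with a direct case split (some entry of negative valuation, else $A\in SL_2(\Zp)$ and hence some entry a unit) is the same mechanism in slightly different packaging.

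For part~(2) the paper tests against the diagonal family $M_x=\left(\begin{array}{cc}x&0\\0&x^{-1}\end{array}\right)$, computing $tr(A^{-1}M_x)=dx+ax^{-1}$. Your upper-triangular family with the extra $p^{-2n}$ off-diagonal entry is a genuinely different and in fact more robust choice: with the purely diagonal family the trace of $A^{-1}M_x$ involves only the diagonal entries $a,d$ of $A$, so a translator with $a=0$ --- for instance $A=\omega$, where $tr(\omega^{-1}M_x)=0$ for every $x$ --- absorbs the whole family into a single translate of $W'$, and the paper's argument as written does not address this case. Your $p^{-2n}$ term is precisely what brings the lower-left entry $\gamma_i$ of $g_i^{-1}$ into the trace, after which the dichotomy $\gamma_i\ne0$ versus $\gamma_i=0$ (where $\det g_i^{-1}=1$ forces $\alpha_i\ne0$) handles every translator uniformly; your closing remark on why the exponent must be $-2n$ rather than $-n$ pinpoints exactly this.
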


\begin{proof}
1. 
We consider the matrices : 
$$A_1 = I ,\quad A_2= \left(\begin{array}{cc}0 & 1 \\-1 & 0\end{array}\right), \quad A_3=\left(\begin{array}{cc}a^{-1} & 0 \\0 & a\end{array}\right) \quad \mbox{ and } \quad A_4=\left(\begin{array}{cc}0 & -b^{-1} \\b & 0\end{array}\right)$$
with $v_p(a)>0$ and $v_p(b)>0$.

We show that $SL_2(\Qp)=\bigcup_{i=1}^{4} A_iW$.
Suppose there exists  $$M=\left(\begin{array}{cc}x & y \\u & v\end{array}\right)\in SL_2(\Qp)$$ such that $M\nin \bigcup_{i=1}^{4} A_iW$.

Since $M\nin A_1W\bigcup A_2W$, we have $x+v =\varepsilon$ and $y-u=\delta$ with $v_p(\varepsilon)\geq 0$ and $v_p(\delta)\geq 0$. 
Since $M\nin A_3W$, we have $ax+a^{-1}v=\eta$ with $v_p(\eta)\geq 0$.
We deduce $a(\varepsilon-v )+a^{-1}v=\eta$ and $v=\frac{\eta-a\varepsilon}{a^{-1}-a}$.
Similarly, it follows from $M\nin A_4W$ that $u=\frac{\theta-b\delta}{b^{-1}-b}$ with some $\theta$ such that $v_p(\theta)\geq0$. 

Since $v_p(a)>0$, we have $v_p(a+a^{-1})<0$. 
From $v_p(\eta - a\varepsilon)\geq min\{v_p(\eta);v_p(a\varepsilon)\}\geq0$,
we deduce that $v_p(v)=v_p(\frac{\eta-a\varepsilon}{a+a^{-1}})=v_p(\eta-a\varepsilon)-v_p(a+a^{-1})>0$. 
Similarly $v_p(u)>0$. 
It follows that $v_p(x)=v_p(\varepsilon -v)\geq 0$ and $v_p(y)\geq0$.

Therefore $v_p(det(M))=v_p(xv-uy)\geq min\{v_p(xv),v_p(uy)\}>0$ and thus $det(M)\neq 1$, a contradiction .

2.  
We show that the family of matrices $(M_x)_{x\in\Qp^{\times}}$ cannot be covered by finitely many $SL_2(\Qp)$-translates of $W'$, where :
$$M_x =\left(\begin{array}{cc}x & 0 \\0 & x^{-1}\end{array}\right)$$
Let  $A=\left(\begin{array}{cc}a & b \\c & d\end{array}\right) \in SL_2(\Qp)$. 
Then $tr(A^{-1}M_x)=dx+ax^{-1}$. If $v_p(x)>max\{|v_p(a)|, |v_p(d)|\}$ then $v_p(tr(A^{-1}M_x))<0$ and $M_x\nin AW'$. 

Therefore for every finite family $\{A_j\}_{i\leq n}$, there exist $x\in \Qp$ such that $M_x\nin \bigcup_{j=1}^{n} A_jW'$.
\end{proof}

\begin{rmk}
We remark that the sets $W$ and $W'$ form a partition of $SL_2(\Qp)$. They are both definable in the field language because the valuation $v_p$ is definable in $\Qp$.
\end{rmk}

\begin{lem}
$W\subseteq Q_1^{SL_2(\Qp)}$ and for $\delta\in\Qp^{\times}\backslash (\Qp^{\times})^2$ and $\mu\in GL_2(\Qp)$, $Q_{\delta}^{\mu \cdot SL_2(\Qp)}\subseteq W'$
\end{lem}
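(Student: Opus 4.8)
The plan is to reduce both inclusions to one elementary fact about the $p$-adic valuation, and then let Proposition \ref{tr} do the rest. The fact is: \emph{if $A\in SL_2(\Qp)$ satisfies $v_p(tr(A))<0$, then $tr(A)^2-4$ is a nonzero square in $\Qp^\times$.}

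To prove this fact I would write $tr(A)^2-4=tr(A)^2\bigl(1-4\,tr(A)^{-2}\bigr)$. The first factor is a nonzero square, since $tr(A)\neq 0$. For the second, $v_p(tr(A))<0$ gives $v_p\bigl(4\,tr(A)^{-2}\bigr)=v_p(4)-2v_p(tr(A))\geq v_p(4)+2$, which is $\geq 2$ for $p$ odd and $\geq 4$ for $p=2$; hence $1-4\,tr(A)^{-2}$ is a unit congruent to $1$ modulo $p$ (resp. modulo $8$ when $p=2$), so it is a square by the criterion recalled just before Fact \ref{Serre}. Thus $tr(A)^2-4\in(\Qp^\times)^2$, and it is nonzero because $v_p(tr(A)^2-4)=2v_p(tr(A))$ is finite.

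Granting the fact, the first inclusion is immediate: for $A\in W$ we have $v_p(tr(A))<0$, so $tr(A)^2-4\in(\Qp^\times)^2$, whence $A\in Q_1^{SL_2(\Qp)}$ by Proposition \ref{tr}. For the second, fix $\delta\in\Qp^\times\backslash(\Qp^\times)^2$ and $\mu\in GL_2(\Qp)$ and take $A\in Q_\delta^{\mu\cdot SL_2(\Qp)}\subseteq Q_\delta^{GL_2(\Qp)}$, which by the proof of Proposition \ref{tr} equals $\{B\in SL_2(\Qp)\mid tr(B)^2-4\in\delta\cdot(\Qp^\times)^2\}\cup\{I,-I\}$. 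If $A=\pm I$ then $v_p(tr(A))=v_p(2)\geq 0$, so $A\in W'$. Otherwise $tr(A)^2-4\in\delta\cdot(\Qp^\times)^2$; were $v_p(tr(A))<0$, the fact would give $tr(A)^2-4\in(\Qp^\times)^2$, forcing $\delta\in(\Qp^\times)^2$, contrary to the choice of $\delta$. Hence $v_p(tr(A))\geq 0$ and $A\in W'$.

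The only point requiring care is the case $p=2$ in the fact, where one must remember $v_2(4)=2$ and invoke the mod-$8$ characterization of squares rather than the mod-$p$ one; everything else is a direct reading of Proposition \ref{tr}.
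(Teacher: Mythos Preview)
Your proof is correct and follows essentially the same route as the paper: both establish the key fact that $v_p(tr(A))<0$ forces $tr(A)^2-4\in(\Qp^\times)^2$, treating odd $p$ and $p=2$ separately via the square criteria recalled before Fact~\ref{Serre}, and then invoke Proposition~\ref{tr}. The only cosmetic difference is that you factor $tr(A)^2-4=tr(A)^2\bigl(1-4\,tr(A)^{-2}\bigr)$ and argue the second factor is a principal unit, whereas the paper computes the valuation and angular component (resp.\ the residue mod~$8$) of $tr(A)^2-4$ directly; the second inclusion, which the paper dispatches ``by complementarity'', you spell out by contrapositive, which amounts to the same thing.
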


\begin{proof}
Let be $A\in SL_2(\Qp)$ with $v_p(tr(A))<0$.
\\For $p\neq 2$, since $v_p(tr(A))<0$,  $v_p(tr(A)^2-4)=2v_p(tr(A))$ and $ac(tr(A)^2-4)=ac(tr(A)^2)$, so $tr(A)^2-4$ is a square in $\Qp$. 
\\For $p=2$, we can write $tr(A)=2^nu$ with $n\in \Z$ and $u\in \Zp^{\times}$. Then $tr(A)^2-4=2^{2n}(u^2-4\cdot 2^{-2n})$. Since $n\leq -1$,  $u^2-4\cdot 2^{-2n} \equiv u^2 \equiv 1 (\mbox{mod } 8)$, so $tr(A)^2-A\in (\mathbb{Q}_2^{\times})^2$.

In all cases, by the proposition \ref{tr}, $W\subseteq Q_1^{SL_2(\Qp)}$ and ,by complementarity, $Q_{\delta}^{\mu\cdot SL_2(\Qp)}\subseteq W'$.
\end{proof}

We can now conclude with the following corollary, similar to \cite[Remark 9.8]{cartan} :

\begin{cor}
\begin{enumerate}
  \item The Cartan subgroup $Q_1$ is generous in $SL_2(\Qp)$.
  \item The Cartan subgroups $Q_{\delta}^{\mu}$ (for $\delta\in\Qp^{\times}\backslash (\Qp^{\times})^2$ and $\mu\in GL_2(\Qp)$) are not generous in $SL_2(\Qp)$. 
\end{enumerate}
\end{cor}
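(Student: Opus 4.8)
The plan is simply to assemble the two preceding results, since essentially all the content is already contained in the Proposition (genericity of $W$, non-genericity of $W'$) and the Lemma ($W\subseteq Q_1^{SL_2(\Qp)}$ and $Q_{\delta}^{\mu\cdot SL_2(\Qp)}\subseteq W'$). Unwinding the definitions, $Q_1$ is generous in $SL_2(\Qp)$ exactly when $Q_1^{SL_2(\Qp)}$ is generic, and $Q_{\delta}^{\mu}$ is generous exactly when $(Q_{\delta}^{\mu})^{SL_2(\Qp)}$ is generic. So first I would record the identification $(Q_{\delta}^{\mu})^{SL_2(\Qp)}=Q_{\delta}^{\mu\cdot SL_2(\Qp)}$: conjugation by $\mu\in GL_2(\Qp)$ is an automorphism of $SL_2(\Qp)$, hence $Q_{\delta}^{\mu}$ is a genuine subgroup of $SL_2(\Qp)$ and the set of its $SL_2(\Qp)$-conjugates is exactly the set appearing in the Lemma.

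The one general fact to isolate is the monotonicity of genericity: if $X\subseteq Y\subseteq SL_2(\Qp)$ and $SL_2(\Qp)=\bigcup_{i=1}^{n}g_iX$, then $SL_2(\Qp)=\bigcup_{i=1}^{n}g_iY$ as well; hence any set containing a left-generic set is left-generic, and any subset of a non-generic set is non-generic. Since trace is a conjugacy invariant, each of $W$, $W'$, $Q_1^{SL_2(\Qp)}$ and $Q_{\delta}^{\mu\cdot SL_2(\Qp)}$ is invariant under $SL_2(\Qp)$-conjugation, so for these sets left- and right-genericity coincide and there is no ambiguity about sides.

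For (1), the Lemma gives $W\subseteq Q_1^{SL_2(\Qp)}$ and part 1 of the Proposition gives that $W$ is generic; by monotonicity $Q_1^{SL_2(\Qp)}$ is generic, i.e. $Q_1$ is generous. For (2), the Lemma gives $Q_{\delta}^{\mu\cdot SL_2(\Qp)}\subseteq W'$ and part 2 of the Proposition gives that $W'$ is not generic; by monotonicity $Q_{\delta}^{\mu\cdot SL_2(\Qp)}$ is not generic, i.e. $Q_{\delta}^{\mu}$ is not generous.

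There is no real obstacle here: the corollary is a packaging of earlier work. The only mild points of care are to state explicitly the identification of $(Q_{\delta}^{\mu})^{SL_2(\Qp)}$ with the notation $Q_{\delta}^{\mu\cdot SL_2(\Qp)}$ used in the Lemma, and, if one wants to make the picture complete, to invoke the Corollary with the partition of $SL_2(\Qp)$ to see that these Cartan subgroups together with the unipotent part really do account for all of $SL_2(\Qp)$ — although that last remark is not needed for the statement itself.
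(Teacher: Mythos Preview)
Your proposal is correct and matches the paper's own approach: the paper does not even write out a proof of this corollary, merely stating ``We can now conclude with the following corollary'' after the Proposition and the Lemma, so the intended argument is exactly the assembly you describe. Your added remarks on monotonicity of genericity and on the identification $(Q_{\delta}^{\mu})^{SL_2(\Qp)}=Q_{\delta}^{\mu\cdot SL_2(\Qp)}$ are reasonable clarifications that the paper leaves implicit.
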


\nocite{*}
\bibliographystyle{plain}
\bibliography{truc}

\end{document}